
\mag1200
\hoffset5mm


\documentclass[cmp,draft,numbook]{svjour}
\usepackage{amsmath,amssymb,verbatim}


\spnewtheorem*{cor}{Corollary}{\bf}{\it}
\spnewtheorem*{lem}{Lemma}{\bf}{\it}
\spnewtheorem*{pro}{Proposition}{\bf}{\it}
\spnewtheorem*{teo}{Theorem}{\bf}{\it}


\newcommand\al{\alpha}
\newcommand\dd{\partial}
\newcommand\De{\Delta}
\newcommand\de{\delta}
\newcommand\FF{\mathbb F}
\renewcommand\ge{\geqslant}

\newcommand\La{\Lambda}
\newcommand\la{\lambda}
\newcommand\lap{\lambda^{\ts\prime}}
\newcommand\lcd{,\ldots,}
\renewcommand\le{\leqslant}
\newcommand\mup{\mu^{\,\prime}}
\newcommand\ns{\hspace{-.75pt}}

\newcommand\QQ{\mathbb Q}
\renewcommand\phi{\varphi}

\newcommand\Sg{\mathfrak{S}}
\newcommand\si{\sigma}

\newcommand\ts{\hspace{0.75pt}}
\newcommand\xh{\widehat{x}}
\newcommand\ZZ{\mathbb{Z}}


\newcommand\beq{\begin{equation}}
\newcommand\eeq{\end{equation}}
\newcommand\be{\begin{equation*}}
\newcommand\ee{\end{equation*}}



\begin{document}

\title{Macdonald operators at infinity}
\titlerunning{Macdonald operators}

\author{M.\,L.\,Nazarov and E.\,K.\,Sklyanin}
\authorrunning{Nazarov and Sklyanin}

\institute{Department of Mathematics, University of York, 
York YO10 5DD, United Kingdom}

\date{}

\maketitle


\thispagestyle{empty} 

\begin{abstract}
We construct a family of pairwise commuting operators  
such that the Macdonald symmetric functions of infinitely many variables 
$x_1,x_2,\,\ldots$ and of two parameters $q,t$
are their eigenfunctions.
These operators are defined as
limits at $N\to\infty$ of
renormalised Macdonald operators
acting on symmetric polynomials in the variables 
$x_1\lcd x_N$.
They are differential operators
in terms of the power sum variables $p_n=x_1^n+x_2^n+\ldots$ 
and we compute their symbols
by using the Macdonald reproducing kernel.
We express these symbols in terms of
the Hall\ts-Littlewood symmetric functions
of the variables $x_1,x_2,\,\ldots\ $. 
Our result also yields elementary step operators
for the Macdonald symmetric functions. 

\end{abstract}

\keywords{Macdonald symmetric functions}


\newpage


\section*{Introduction}

Over the last two decades
the Macdonald polynomials \cite{M}  were
the subject of 
much attention
in Combinatorics and Representation Theory.
These polynomials are symmetric in the $N$ variables
$x_1\lcd x_N$ and also depend on two parameters
denoted by $q$ and $t\,$. They are labelled by partitions of $0,1,2,\ldots$
with no more than $N$ 
parts. Up to normalization, they can be defined
as eigenfunctions of certain linear operators
acting on the space of all symmetric polynomials
in the variables $x_1\lcd x_N$ with coefficients
from the field $\QQ(q,t)\,$.
These operators have been introduced by Macdonald \cite{M}
as the coefficients of a certain operator 
valued polynomial $D_N(u)$ of degree $N$
in a variable $u$ with the
constant term $1\ts$, see \eqref{dnu}.
In particular, Macdonald has observed that 
all the eigenvalues of the coefficient
of $D_N(u)$ at $u$ are already free from multiplicities. 
Hence this operator coefficient alone can be used to define
the Macdonald polynomials.

It is quite common in Combinatorics to extend various symmetric polynomials
to an infinite countable set of variables. In particular,
the Macdonald polynomials are extended to infinitely many
variables $x_1,x_2,\ldots$ by using the \textit{stability property}
\eqref{macst} of these polynomials and by passing to their limits as 
$N\to\infty\,$. The limits are the Macdonald symmetric functions,
which are labelled by partitions of $0,1,2,\ldots\,\,$.
They have been also studied very well.
In particular, the limit at $N\to\infty$
of a renormalized coefficient of the $D_N(u)$ at $u$
was considered in \cite{M}.
Another expression for the same limit was given in \cite{AMOS},
see also \cite{CJ,GH}.

The limits at $N\to\infty$ of other coefficients of the $D_N(u)$
have so far received less attention. However,
in the remarkable work of Shiraishi \cite{S} the limits of certain 
linear combinations of
all the coefficients were expressed in terms of the 
\textit{vertex operators\/}
associated with an infinite dimensional Heisenberg Lie algebra,
see also \cite{FHHSY,Y}. In a more recent work 
\cite{AK} this result of \cite{S}
has been reformulated by using the well known correspondence \cite{J}
between the vertex operators and the Hall\ts-Littlewood symmetric functions,
which are 
specializations of the Macdonald symmetric functions at $q=0\,$.

In the present article we consider the limits at $N\to\infty$
of linear combinations of all the coefficients
of $D_N(u)\ts$, different from those 
in \cite{AK,FHHSY,S,Y}. Our linear combinations arise naturally
from the theory of the double affine Hecke algebras, see for instance
\cite{SV}. We also express our limits
in terms of the Hall\ts-Littlewood symmetric functions.
Once stated our result can be derived from
those of \cite{AK,S}.
However, we obtained our result independently. 
Moreover, our proof is different and yields 
new identities related to the Hall\ts-Littlewood 
polynomials. Again, the latter can be regarded as 
specializations of the Macdonald polynomials at~$q=0\,$.

The Macdonald polynomials can be regarded as generalizations
of the Jack polynomials which are symmetric in $x_1\lcd x_N$ 
and also depend on a formal parameter $\al\,$.
The latter polynomials are obtained from the former
when $q=t^{\ts\al}$ and $t\to1\,$. 
Then the coefficients of $D_N(u)$
degenerate to the 
Sekiguchi\ts-Debiard differential operators \cite{D,Se}.
In \cite{NS} we studied the limits
of the latter operators at $N\to\infty\,$. 
In the present article we generalize the main result of that work
to the Macdonald case.
However, the methods used here and in \cite{NS} 
are quite~different.

\enlargethispage{6pt}

As an application of our result, we construct 
\emph{elementary step operators\/} for the  
Macdonald symmetric functions.
In terms of the labels, our operators correspond
to decreasing any given non-zero part of a partion by $1$
and to the operation on partitions inverse to that,
see our formulas \eqref{cshift} and \eqref{bshift} respectively.
For~the origins of this construction
see the work \cite{Sk} and references therein. 
For related but different results on the
Macdonald polynomials see the works \cite{KN,KaN} and \cite{LV}.

Here is the plan of the present article. 
In Section \ref{section::symfun}
we recall some basic facts from the theory of symmetric functions,
including the definition of Macdonald polynomials. 
After establishing the basics we state our main result,
which is an explicit expression
for the limit of a renormalized polynomial $D_N(u)$
at $N\to\infty\,$.
Then we explicitly construct
our elementary step operators. 
The proof of our main result 
is given in Section~\ref{section::theproof}.
Our main 
tool is the notion of the symbol of an operator 
relative to the reproducing kernel associated with the
Macdonald polynomials. Using it,
we reduce the proof to a certain determinantal identity
for each $N=1,2,\ldots$ which is
proved in the rest of Section~\ref{section::theproof}.

In this article we generally keep to
the notation of the book \cite{M} 
for symmetric functions. When using results from \cite{M}
we simply indicate their numbers within the book.
For example, the statement (6.9) from Chapter I of the book
will be referred to as [I.6.9] assuming it is from~\cite{M}. 
We do not number our own lemmas, propositions, theorems or corollaries
because here we have only one of each.


\section{Symmetric functions}
\label{section::symfun}


\subsection{Monomial functions and power sums}
\label{subone}

Fix a field $\FF\,$. For any integer $N\ge1$ 
denote by $\La_N$ the $\FF$-algebra of symmetric 
polynomials in $N$ variables $x_1\lcd x_N\,$.
The algebra $\La_N$ is graded by the polynomial degree.
The substitution $x_N=0$ defines
a homomorphism $\La_N\to\La_{N-1}$ preserving the degree.
Here $\La_{\ts0}=\FF\,$.
The inverse limit of the sequence
$$
\La_1\leftarrow\La_2\leftarrow\ldots
\hspace{-8pt}
$$ 
in the category of graded algebras is denoted by $\La\,$. 
The elements of 
$\La$ are called \textit{symmetric functions\/}.    
Following \cite{M} we will introduce some standard bases of $\La\,$.

Let $\la=(\,\la_1,\la_2,\ldots\,)$ be any partition of $\,0,1,2,\ldots\,\,$. 
The number of non-zero parts is called the {\it length\/} of 
$\la$ and is denoted by $\ell(\la)\,$. 
If $\ell(\la)\le N$ then
the sum of all distinct 
monomials obtained by permuting the $N$ variables in
$x_1^{\,\la_1}\ldots x_N^{\,\la_N}$  is denoted by
$m_\la(x_1\lcd x_N)\,$. 
The symmetric polynomials $m_\la(x_1\lcd x_N)$ with
$\ell(\la)\le N$ form a basis of the vector space $\La_N\,$. 
By definition, for $\ell(\la)\le N$
\begin{equation*}
m_\la(x_1\lcd x_N)=
\sum_{1\le i_1<\ldots<i_k\le N}
\ \sum_{\si\in\Sg_k}\ c_\la^{-1}\ 
x_{i_{\si(1)}}^{\,\la_1}\ldots x_{i_{\si(k)}}^{\,\la_k}
\end{equation*}
where we write $k$ instead of $\ell(\la)\,$. Here
$\Sg_k$ is the symmetric group permuting the numbers $1\lcd k$
and
\begin{equation}
\label{cela}
c_\la=k_1!\,k_2!\,\ldots
\end{equation}
if $k_1,k_2,\ldots$ are the respective multiplicites of the parts $1,2,\ldots$
of $\la\,$. Further,
\begin{equation}
\label{monst}
m_\la(x_1\lcd x_{N-1},0)=
\left\{
\begin{array}{cl}
m_\la(x_1\lcd x_{N-1})
&\quad\textrm{if}\quad\,\ell(\la)<N\,;
\\[2pt]
0
&\quad\textrm{if}\quad\,\ell(\la)=N\,.
\end{array}
\right.
\end{equation}
Hence for any fixed partition $\la$ the sequence of polynomials
$m_\la(x_1\lcd x_N)$ with $N\ge\ell(\la)$ 
has a limit in $\La\,$. This limit is called
the \textit{monomial symmetric function\/}
corresponding to $\la\,$. Simply omitting the variables,
we will denote the limit by $m_\la\,$.
With $\la$ ranging over all partitions of $0,1,2\ldots$ 
the symmetric functions $m_\la$ form a basis of the vector space $\La\,$.
Note that if $\ell(\la)=0$ then we set $m_\la=1\,$.

We will be also using another standard basis of the vector space $\La\,$.
For each $n=1,2,\ldots$ denote $p_n(x_1\lcd x_N)=x_1^n+\ldots+x_N^n\,$.
When the index $n$ is fixed
the sequence of symmetric polynomials $p_n(x_1\lcd x_N)$ with
$N=1,2,\ldots$ has a limit in $\La\,$, called
the \textit{power sum symmetric function} of degree $n\,$.
We will denote it by $p_n\,$. 
More generally, for any partition $\la$ put
\begin{equation}
\label{pla}
p_\la=p_{\la_1}\,p_{\la_2}\ldots
\end{equation}
where we set $p_0=1\,$. The elements $p_\la$
form another basis of $\La\,$. In other words,
the elements $p_1,p_2,\ldots$ are
free generators of the commutative algebra $\La$ over $\FF\,$.

In this article we will be using 
the \textit{natural ordering\/} of partitions.
By definition, here $\la\ge\mu$ if
$\la$ and $\mu$ are partitions of the same number and
$$
\la_1\ge\mu_1,\ \,
\la_1+\la_2\ge\mu_1+\mu_2,\ \,
\ldots\ \,.
$$
This is a partial ordering.
Note that by [I.6.9] any monomial symmetric function $m_\mu$
is a linear combination of the symmetric functions 
$p_\la$ where $\la\ge\mu\,$.


\subsection{Hall\ts-Littlewood functions}
\label{subhl}

Choose $\FF$ to be the field $\QQ(t)$ where $t$ is a formal parameter.
Take any partition $\la$ with $\ell(\la)\le N\,$. Using the notation of
\eqref{cela} put
\begin{equation}
\label{vla}
v_\la(t)\,=\,\prod_{i\ge 0}\,\,\prod_{j=1}^{k_i}\,\,
\frac{\,1-t^{\,j}\!}{\,1-t\,\,}
\end{equation}
where $k_{\ts0}=N-\ell(\la)\,$.
Consider the sum of all the $N\ts!$ products obtained from
$$
x_1^{\,\la_1}\ldots x_N^{\,\la_N}
\prod_{1\le i<j\le N}\frac{x_i-t\,x_j}{x_i-x_j}
$$
by permuting $x_1\lcd x_N\ts$. This sum is 
a symmetric polynomial in $x_1\lcd x_N$ with coefficients
from $\ZZ[t]\ts$. Dividing it by $v_\la(t)$ we get
the \textit{Hall\ts-Littlewood symmetric polynomial\/}
$P_\la(x_1\lcd x_N)\,$, see [III.2.1]. All coefficients of
the latter polynomial also belong to $\ZZ[t]$ by [III.1.5].
Furthermore by [III.2.5] similarly to \eqref{monst} we have
$$
P_\la(x_1\lcd x_{N-1},0)=
\left\{
\begin{array}{cl}
P_\la(x_1\lcd x_{N-1})
&\quad\textrm{if}\quad\,\ell(\la)<N\,;
\\[2pt]
0
&\quad\textrm{if}\quad\,\ell(\la)=N\,.
\end{array}
\right.
$$
Hence for any fixed partition $\la$ the sequence of polynomials
$P_\la(x_1\lcd x_N)$ with $N\ge\ell(\la)$ 
has a limit in $\La\,$. This is the
\textit{Hall\ts-Littlewood symmetric function\/} $P_\la\ts$.

Along with the symmetric function $P_\la$ it is convenient to
use the symmetric function $Q_\la$ which is a scalar multiple
of $P_\la\ts$. By definition,
\begin{equation}
\label{qbp}
Q_\la=b_\la(t)\,P_\la 
\end{equation}
where
\begin{equation}
\label{bla}
b_\la(t)\,=\,\prod_{i\ge1}\,\,\prod_{j=1}^{k_i}\,\,
(\ts 1-t^{\,j}\ts)\,.
\end{equation}
We will also use the symmetric polynomial
$$
Q_\la(x_1\lcd x_N)=b_\la(t)\,P_\la(x_1\lcd x_N)\,. 
$$

When we need to distinguish 
$x_1,x_2,\ldots$ from any other variables,
we will write $f(\ts x_1,x_2,\ldots\ts)$ instead of $f\in\La\,$.
Now let $y_1,y_2,\ldots$ be
variables independent~of~the $x_1,x_2,\ldots\,\,$. 
Then by [III.4.4] we have the identity
\begin{equation}
\label{hlkernel}
\prod_{i,j=1}^\infty
\frac{\,1-t\,x_i\ts y_j}{\,1-x_i\ts y_j\,\,\,}
\,=\,
\sum_\la\,Q_\la(\ts x_1,x_2,\ldots\ts)\,P_\la(\ts y_1,y_2,\ldots\ts)
\end{equation}
where $\la$ ranges over all partitions of
$0,1,2\ldots\,$. The product at the left hand side of
this identity is regarded as an infinite sum of
monomials in $x_1,x_2,\ldots$ and in $y_1,y_2,\ldots$ by expanding 
the factor corresponding to $i,j$ as a series at $x_i\ts y_j\to0\,$.

Note that at $\,t=0\,$ both $P_\la(x_1\lcd x_N)$ and $Q_\la(x_1\lcd x_N)$
specialize to the \textit{Schur symmetric polynomial} $s_\la(x_1\lcd x_N)\,$.
Respectively, the symmetric functions
$P_\la$ and $Q_\la$ specialize at $\,t=0\,$
to the \textit{Schur symmetric function} $s_\la\ts$.
The symmetric function $P_\la$ also admits
specialization at $t=1\ts$. By [III.2.4] the latter specialization
coincides with the monomial symmetric function $m_\la\ts$.   

Now take the symmetric function
$Q_\la$ corresponding to the partition $\la=(n)$ with one part only. 
We will denote this symmetric function by $Q_n\ts$. 
By using a variable $u$ independent of $x_1,x_2,\ldots$ and $t$ 
introduce the generating function  
\begin{equation}
\label{qu}
Q(u)\,=\,1+\sum_{n=1}^\infty\,Q_{n}\ts u^n\ts.
\end{equation}
By [III.2.10] then
\begin{equation}
\label{qprod}
Q(u)\,=\,\prod_{i=1}^\infty\,
\frac{\,1-t\,x_i\,u\,}{1-x_i\ts u\,\,\,}\,\ts.
\end{equation}

\medskip\noindent
By taking the logarithm of the infinite product here and then exponentiating,
\begin{equation}
\label{quexp}
Q(u)\,=\,\exp\,\Bigl(\,\,
\sum_{n=1}^\infty\,
\frac{1-t^n\!}{n}\,\ts p_n\ts u^n
\ts\Bigr)\,.
\end{equation}


\subsection{Green polynomials}
\label{subgreen}

The basis of Hall\ts-Littlewood symmetric functions
can be related to the basis of $p_\la$ as follows. Write
\begin{equation}
\label{laxmu}
p_\la\,=\,\sum_\mu\,X_{\la\mu}(t)\ts P_\mu
\end{equation}
where $X_{\la\mu}(t)\in\QQ(t)$ while both $\la$ and $\mu$
are partitions of the same number. By [III.2.7] each $X_{\la\mu}(t)$ 
is a polynomial in the variable $t$ with integral coefficients. Furthermore
by [III.7.7] this polynomial in $t$ is monic and has the degree
$$
n_\mu\,=\,\sum_{i\ge1}\ts(\,i-1\ts)\,\mu_i\,.
$$
The elements 
$$
t^{\,n_\mu}X_{\la\mu}(t^{-1})\in\ZZ[\ts t\ts]
$$
are called the \textit{Green polynomials\/}, 
see [Ex.\ III.7.7] and references therein.

Because $P_\mu$ specializes at $\,t=0\,$ to the
Schur symmetric function $s_\mu\ts$, the value $X_{\la\mu}(0)$
coincides with the value of the irreducible character
of the symmetric group labeled by the partition $\mu$ at the 
conjugacy class labelled by the partition $\la\,$. Moreover, 
there are \textit{orthogonality relations\/} [III.7.3]
\begin{equation}
\label{orthorel}
\sum_\la\,
X_{\la\mu}(t)\ts X_{\la\nu}(t)\ts/z_\la(t)\,=\,\de_{\mu\nu}\,b_\mu(t)
\end{equation}
where
$$
z_\la(t)\,=\,z_\la\,\prod_{i=1}^{\ell(\la)}\,\frac1{1-t^{\ts\la_i}}
$$
while
\begin{equation}
\label{zela}
z_\la=1^{k_1}k_1!\,2^{k_2}k_2!\ts\,\ldots
\end{equation}
in the notation of \eqref{cela}. At $\,t=0\,$ the relations \eqref{orthorel}
specialize to the standard orthogonality relations for the
irreducible characters of symmetric groups.  
Due to \eqref{qbp} and to \eqref{orthorel} the definition \eqref{laxmu} 
of the polynomials $X_{\la\mu}(t)$ implies~that
\begin{equation}
\label{muxla}
Q_\mu\,=\,\sum_\la\,X_{\la\mu}(t)\,p_\la\ts/z_\la(t)\,.
\end{equation}
  

\subsection{Macdonald functions}
\label{submac}

Now let $\FF$ be the field $\QQ(q,t)$
where $q$ and $t$ are formal parameters independent of each other. 
Define a bilinear form $\langle\ ,\,\rangle$ on the vector space $\La$
by setting for any $\la$ and $\mu$
\begin{equation}
\label{macprod}
\langle\,p_\la,p_\mu\ts\rangle=z_\la\,\de_{\la\mu}\,
\prod_{i=1}^{\ell(\la)}\,\frac{1-q^{\ts\la_i}}{1-t^{\ts\la_i}\,}
\end{equation}
in the notation of \eqref{zela}. This form is obviously symmetric
and non-degenerate. By [VI.4.7]
there exists a unique family of elements $M_\la\in\La$ such that
$$
\langle\,M_\la,M_\mu\ts\rangle=0
\quad\text{for}\quad
\la\neq\mu
$$
and such that any $M_\la$ equals $m_\la$ 
plus a linear combination of the elements $m_\mu$
with $\mu<\la$ in the natural partial ordering.
The elements $M_\la\in\La$ are called 
the \textit{Macdonald symmetric functions\/}.
Alternatively, they can be defined as follows.

Take the algebra $\La_N$ of symmetric polynomials in the variables
$x_1\lcd x_N\,$. For each index $i=1\lcd N$ define
the \textit{q\ts-shift operator\/} $T_i$ on the algebra $\La_N$ by
$$
(T_i\ts f)(x_1\lcd x_N)=f(x_1\lcd q\ts x_i\lcd x_N)\,.
$$
Denote by $\De\ts(\ts x_1\lcd x_N)$ the {\it Vandermonde polynomial\/} 
of $N$ variables
$$
\det\Big[x_i^{\,N-j}\Big]{\phantom{\big[}\!\!}_{i,j=1}^N=
\prod_{1\le i<j\le N}(x_i-x_j)\,.
$$
Put
\begin{equation}
\label{dnu}
D_N(u)=
\De\ts(\ts x_1\lcd x_N)^{-1}\cdot
\det\Big[\,
x_i^{\,N-j}\bigl(\ts 1-u\,t^{\,1-j}\,T_i\ts\bigr)
\Big]{\phantom{\big[}\!\!}_{i,j=1}^N
\hspace{-8pt}
\end{equation}
where $u$ is another variable.
The last determinant is defined as the alternated~sum
\begin{equation}
\label{sedeb}
\sum_{\si\in\Sg_N}
(-1)^{\si}\,
\prod_{i=1}^N\,\,\bigl(\,
x_i^{\,N-\si(i)}\bigl(\ts 1-u\,t^{\,1-\si(i)}\,T_i\ts\bigr)\bigr)
\end{equation}
where as usual $(-1)^{\si}$ denotes the sign of permutation $\si\,$.
In every product over $i=1\lcd N$ appearing in \eqref{sedeb}
the operator factors pairwise commute, 
hence their ordering does not matter. By [VI.4.16]
the $D_N(u)$ is a polynomial in 
$u$ with pairwise 
commuting operator coefficients
preserving the space $\La_N\,$.
We will call the restrictions of the
coefficients to the space $\La_N$ the 
\textit{Macdonald operators\/}. 
By [VI.4.15] they
have a common eigenbasis in $\La_N$ 
parametrized by
partitions $\la$ of length $\ell(\la)\le N\ts$. 
These eigenvectors are the 
\textit{Macdonald symmetric polynomials\/}. 

For each $\la$ with $\ell(\la)\le N$
there is an eigenvector denoted by $M_\la(x_1\lcd x_N)$
which is equal to $m_\la(x_1\lcd x_N)$ plus
a linear combination of the polynomials 
$m_\mu(x_1\lcd x_N)$ with $\mu<\la\,$ and $\ell(\mu)\le N\ts$. 
It turns out that each coefficient
in this linear combination does not depend on $N\ts$.
Note that if $\la$ and $\mu$ are any two partitions of the same number
such that $\la\ge\mu\,$, then $\ell(\la)\le\ell(\mu)$ by
[I.1.11]. It follows that the polynomials $M_\la(x_1\lcd x_N)$
have the same \text{stability property} as the 
polynomials $m_\la(x_1\lcd x_N)$ in \eqref{monst}:
\begin{equation}
\label{macst}
M_\la(x_1\lcd x_{N-1},0)=
\left\{
\begin{array}{cl}
M_\la(x_1\lcd x_{N-1})
&\quad\textrm{if}\quad\,\ell(\la)<N\,;
\\[2pt]
0
&\quad\textrm{if}\quad\,\ell(\la)=N\,.
\end{array}
\right.
\end{equation}
In particular, the sequence of polynomials
$M_\la(x_1\lcd x_N)$ with $N\ge\ell(\la)$
has a limit in $\La\,$. This is exactly
the Macdonald symmetric function $M_\la\,$.
Further, the eigenvalues of Macdonald operators acting on $\La_N$ 
are known. By [VI.4.15]  
\begin{equation}
\label{deigen}
D_N(u)\,M_\la(x_1\lcd x_N)\,=\,
\prod_{i=1}^N\,\bigl(\ts 1-u\,q^{\ts\la_i}\ts t^{\,1-i}\ts\bigr)
\cdot M_\la(x_1\lcd x_N)\,.
\end{equation}
Note that $M_\la(x_1\lcd x_N)$ is a homogeneous polynomial of 
degree $\la_1+\la_2+\ldots\,\ts$,
\begin{equation}
\label{qdeg}
T_1\ldots T_N\,M_\la(x_1\lcd x_N)\,=\,
q^{\,\la_1+\la_2+\ldots}\ts\,M_\la(x_1\lcd x_N)\,.
\end{equation}
Hence the operator $T_1\ldots T_N$ on $\La_N$
commutes with every coefficient of $D_N(u)\,$.
Also note that by [VI.4.14] the symmetric function
$M_\la$ admits a specialization at $q=0\,$.
This specialization equals the
Hall\ts-Littlewood symmetric function~$P_\la\,$.  


\subsection{Reproducing kernel\/}
\label{reproker}

In this subsection we will regard the elements of $\La$
as infinite sums of finite products of the variables
$x_1,x_2,\ldots\,\,$. For instance, we have
$$
p_n=x_1^n+x_2^n+\ldots
$$
for any $n\ge1\,$. Like in the identity \eqref{hlkernel},
we will write $f(\ts x_1,x_2,\ldots\ts)$ instead of any $f\in\La$
when we need to distinguish 
$x_1,x_2,\ldots$ from other variables.
Now let $y_1,y_2,\ldots$ be
variables independent of $x_1,x_2,\ldots\,\,$. 
According to [VI.2.7]
with the bilinear form \eqref{macprod} on $\La$
one associates the \textit{reproducing kernel}
\begin{equation}
\label{kernel}
\Pi\,=\prod_{i,j=1}^\infty\, 
\frac{(\ts t\,x_i\ts y_j\ts;q\ts)_\infty}{(\ts x_i\ts y_j\ts;\ts q\ts)_\infty}
\end{equation}
where as usual
\begin{equation}
\label{usual}
(\ts u\ts;q\ts)_\infty=\prod_{k=0}^\infty\,\bigl(\ts1-u\,q^{\ts k}\ts\bigr)\,.
\end{equation}

The property of $\Pi$ most useful for us can be stated as 
the following lemma. For any $f\in\La$
denote by $f^{\ts\ast}$ the operator on $\La$
adjoint to the multiplication by $f$
relative to the bilinear form \eqref{macprod}. 
Note that here $f=f(\ts x_1,x_2,\ldots\ts)\,$.

\begin{lem}
We have
\begin{equation*}
f^{\ts\ast}(\Pi)/\Pi=f(\ts y_1,y_2,\ldots\ts)\,.
\end{equation*}
\end{lem}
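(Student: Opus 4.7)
The plan is to deduce the identity from the defining reproducing property of $\Pi$ together with the definition of the adjoint with respect to $\langle\,,\,\rangle$.

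First, I would recall that $\Pi$ is the reproducing kernel for $\langle\,,\,\rangle$ in the following sense: if one formally expands $\Pi$ as a sum of products $u_\la(x_1,x_2,\ldots)\,v_\la(y_1,y_2,\ldots)$ where $\{u_\la\}$ and $\{v_\la\}$ are dual bases of $\La$ relative to $\langle\,,\,\rangle$, then for every $g\in\La$
\be
\langle\,g(x_1,x_2,\ldots),\,\Pi\,\rangle_x\,=\,g(y_1,y_2,\ldots),
\ee
where the subscript $x$ indicates that the pairing is taken in the $x$-variables, with the $y$-variables treated as scalars. This is exactly [VI.2.7] in \cite{M}; concretely one may take $u_\la=M_\la/\langle M_\la,M_\la\rangle$ and $v_\la=M_\la$, so that the expansion converges in each fixed total degree.

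Next, by the very definition of the adjoint, for every $g\in\La$ one has $\langle\,fg,\,\Pi\,\rangle_x=\langle\,g,\,f^{\ts\ast}(\Pi)\,\rangle_x$. Applying the reproducing property to the left hand side with $fg$ in place of $g$ gives
\be
\langle\,g,\,f^{\ts\ast}(\Pi)\,\rangle_x\,=\,(fg)(y_1,y_2,\ldots)\,=\,f(y_1,y_2,\ldots)\,g(y_1,y_2,\ldots).
\ee
On the other hand, treating $f(y_1,y_2,\ldots)$ as a scalar relative to the $x$-pairing and applying the reproducing property once more,
\be
\langle\,g,\,f(y_1,y_2,\ldots)\,\Pi\,\rangle_x\,=\,f(y_1,y_2,\ldots)\,g(y_1,y_2,\ldots).
\ee

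Comparing these two expressions, $\langle\,g,\,f^{\ts\ast}(\Pi)-f(y_1,y_2,\ldots)\,\Pi\,\rangle_x=0$ for every $g\in\La\ts$. Since the bilinear form \eqref{macprod} is non-degenerate on each homogeneous component, this forces $f^{\ts\ast}(\Pi)=f(y_1,y_2,\ldots)\,\Pi\,$, which is the claim after dividing by $\Pi$. The only delicate point is justifying that the manipulations make sense on the formal series $\Pi\ts$; this is handled by working degree by degree in the $y$-variables, since in each fixed total $y$-degree both sides are finite $\FF$-linear combinations of elements of $\La$ in the $x$-variables, and the reproducing expansion converges in that graded sense.
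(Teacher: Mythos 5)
Your proof is correct, but it takes a genuinely different route from the paper. You argue abstractly: you invoke the dual-basis expansion of $\Pi$ from [VI.2.7] to get the reproducing property $\langle\, g,\Pi\,\rangle_x=g(\ts y_1,y_2,\ldots\ts)$, then combine it with the definition of the adjoint and the non-degeneracy of the form \eqref{macprod} to conclude $f^{\ts\ast}(\Pi)=f(\ts y_1,y_2,\ldots\ts)\,\Pi$ for \emph{every} $f$ at once. The paper instead reduces to the generators $f=p_n$ (using that $\La$ is a polynomial algebra in the $p_n$ and that the claimed identity is multiplicative in $f$), computes the adjoint explicitly as $p_n^{\ts\ast}=n\,\frac{1-q^n}{1-t^n}\,\frac{\dd}{\dd\ts p_n}$ directly from \eqref{macprod}, and verifies the identity on the exponential form $\Pi=\exp\bigl(\sum_n \frac1n\frac{1-t^n}{1-q^n}\,p_n(x)\,p_n(y)\bigr)$ by differentiation. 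Your argument is shorter and more general --- it works for the reproducing kernel of any non-degenerate form admitting graded dual bases, with no computation --- and your closing remark about working degree by degree correctly disposes of the only formal-series subtlety. The paper's computational route has the side benefit of producing the explicit formula \eqref{pnast} for $p_n^{\ts\ast}$, which is needed again later (in deriving \eqref{bucu} and in rewriting the operators $A^{\ts(k)}$ in terms of $p_n$ and $\dd/\dd\ts p_n$), so under your approach that formula would still have to be established separately.
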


\begin{proof}
The commutative algebra $\La$ is 
generated by the elements $p_n$ with $n\ge1\,$.
Hence it suffices to prove the lemma for $f=p_n$ only.
Take the operator $\dd/\dd\ts p_n$ of derivation 
in $\La$ relative to $p_n=p_n(\ts x_1,x_2,\ldots\ts)\,$. 
Then by the definition \eqref{macprod} 
\begin{equation}
\label{pnast}
p_n^{\ts\ast}\,=\,n\,\,\frac{1-q^n}{1-t^n}\,\frac{\dd}{\dd\ts p_n}\,\,.
\end{equation}
On the other hand, by taking the logarithm of \eqref{kernel} 
and then exponentiating,
$$
\Pi\,=\,\exp\,\Bigl(\,\,
\sum_{n=1}^\infty\,\frac1n\,
\frac{1-t^n}{1-q^n}\,\ts
p_n(\ts x_1,x_2,\ldots\ts)\,p_n(\ts y_1,y_2,\ldots\ts)
\ts\Bigr)\ts.
$$
The lemma for $f=p_n$ follows from
the last two displayed equalities.
\qed
\end{proof}


\subsection{Limits of Macdonald operators}

Let $\FF=\QQ(q,t)$ as in the two subsections above.
For every $N\ge1$ let
$\rho_N$ be the homomorphism $\La_N\to\La_{N-1}$
defined by setting $x_N=0\,$, as
in the beginning of Subsection \ref{subone}. Denote
\begin{equation}
\label{cnu}
A_N(u)=
(\ts T_1\ldots T_N)^{\ts-1}\ts{D_N(u)}/{(\ts u\,;t^{-1})_N}
\end{equation}
where as usual
$$
(\ts u\,;q\ts)_N=\prod_{k=0}^{N-1}\,\bigl(\ts1-u\,q^{\ts k}\ts\bigr)\,.
$$
The right hand side of the equation \eqref{cnu} is regarded as
a rational function of $u$ with the values
being the operators acting on the vector space $\La_N\,$.
Due to the stability property \eqref{macst}
of the Macdonald polynomials, 
\eqref{deigen} and \eqref{qdeg} imply 
$$
\rho_N\,A_N(u)=A_{N-1}(u)\,\rho_N
$$
where $A_{\ts0}(u)=1\,$.
So the sequence of $A_N(u)$ with $N\ge1$ has a limit
at $N\to\infty$. This limit can be written as a series
\begin{equation*}
A(u)=1+{A^{\ts(1)}}/{(\ts u\,;t^{-1}\ts)_1}+
{A^{\ts(2)}}/{(\ts u\,;t^{-1}\ts)_2}+\ldots
\end{equation*}
where the leading term equals $1$ by \eqref{qdeg} while
the coefficients $A^{\ts(1)},A^{\ts(2)},\,\ldots$
are certain linear operators acting on 
$\La\,$.
The Macdonald symmetric functions are
joint eigenvectors of these operators. Namely, by \eqref{deigen} we have
the equality
\begin{equation}
\label{aigen}
A(u)\,M_\la\,=\,M_\la\,
\,\prod_{i=1}^\infty\,\,\frac{\,q^{\ts-\la_i}-u\,t^{\,1-i}}{1-u\,t^{\,1-i}}\,\,.
\end{equation}
In particular,
the operators $A^{\ts(1)},A^{\ts(2)},\,\ldots$ 
pairwise commute and are self-adjoint
relative to the bilinear form \eqref{macprod}.
We call them 
the \textit{Macdonald operators at infinity\/}.
Due to \eqref{macst} their definition 
immediately implies that
$$
A^{\ts(k)}M_\la=0
\quad\text{if}\quad
\ell(\la)<k\,.
$$

The operator $A^{\ts(1)}$ has been well studied,
see for instance [VI.4.3]. It follows from 
\eqref{deigen} and \eqref{qdeg}
that for any partition $\la$
$$
A^{\ts(1)}M_\la\,=\,
\sum_{i=1}^\infty\,(\ts q^{\ts-\la_i}-1\ts)\,t^{\,i-1}\cdot M_\la\,.
$$
In particular, all the eigenvalues of the operator
$A^{\ts(1)}$ on $\La$ are pairwise distinct.
By \cite[Eq.\,32]{AMOS} the operator $A^{\ts(1)}$ is equal 
to the coefficient at $1$ of the series in~$u$
$$
\frac1{1-t}\,\exp\,\Bigl(\,\,
\sum_{n=1}^\infty\,
\frac{1-t^n\!}{n}\,\,u^n\ts p_n
\ts\Bigr)\ts
\exp\,\Bigl(\,\,
\sum_{n=1}^\infty\,
(\ts q^{\ts-n}-1\ts)\,u^{\ts-n}\ts\frac{\dd}{\dd\ts p_n}
\ts\Bigr)
\ts-\,\frac1{1-t}\,\ts.
$$
In the next section we will prove
the following general expression for 
every $A^{\ts(k)}$.

\begin{teo}
In the notation \eqref{qbp}
for every\/ $k=1,2,\ldots$ we have
\begin{equation}
\label{basic}
A^{\ts(k)}=\sum_{\ell(\la)=k}
q^{\,-\la_1-\la_2-\ts\ldots}\,Q_\la\,P_\la^{\,\ast}
\end{equation}
where $\la$ ranges over all partitions of length $k\,$.
\end{teo}

By using 
\eqref{qbp},\eqref{muxla} 
the symmetric functions $P_\la$ and $Q_\la$
can be expressed as linear combinations of the functions
$p_\mu$ where both $\la$ and $\mu$ are partitions of the same number.
By substituting into \eqref{basic} and then using \eqref{pla},\eqref{pnast}
one can express every operator $A^{\ts(k)}$
in terms of $p_n$ and $\dd/\dd\ts p_{n}$ where $n=1,2,\ldots\,\,$.

In the case $k=1$ one can also employ the generating function \eqref{qu}. 
In this case by using the equality
\eqref{quexp}, our theorem follows from the 
expression for the operator $A^{\ts(1)}$ given just before stating the theorem.
Furthermore, for any $k\ge1$
one can derive our theorem 
from the results of \cite[Sec.\,3]{AK}
and \cite[Sec.\,9]{S}. In the present article we give 
a proof independent of these results. In particular, our proof
yields new identities for the Hall\ts-Littlewood symmetric polynomials.


\subsection{Step operators\/}
\label{shiftop}

In this subsection we will obtain a corollary to our theorem.
We will also utilize the following particular case of the {\it Pieri rule}
for Macdonald symmetric functions. By [VI.6.24] for any partition $\mu$
the product $p_1\ts M_\mu$ equals the linear combination of
the symmetric functions $M_\la$ with the coefficients
\begin{equation}
\label{bml}
\prod_{j=1}^{i-1}\,
\frac
{\,1-q^{\,\la_j-\la_i}\,t^{\,i-j+1\,}}
{\,1-q^{\,\la_j-\la_i+1}\,t^{\,i-j\,}}
\,\cdot\,
\prod_{j=1}^{i-1}\,
\frac
{\,1-q^{\,\la_j-\la_i+1}\,t^{\,i-j-1\,}}
{\,1-q^{\,\la_j-\la_i}\,t^{\,i-j\,}}
\end{equation}
where $\la$ ranges over all partitions such that 
the sequence $\la_1,\la_2,\ldots$ is obtained from $\mu_1,\mu_2,\ldots$
by increasing one of its terms by $1$ and $i$ is
the index of the term.
 
Further, by [VI.6.19] the above stated equality implies that for any 
partition $\la$ the symmetric function 
$\dd\ts M_\la/\dd\,p_1=p_1^{\ast}\,M_\la\,(1-t)/(1-q)$ 
is equal to the linear combination of the $M_\mu$ with the coefficients
\begin{equation}
\label{clm}
\prod_{j=1}^{\la_i-1}
\frac
{\,1-q^{\,\la_i-j-1}\,t^{\,\lap_j-i+1\,}}
{\,1-q^{\,\la_i-j}\,t^{\,\lap_j-i\,}}
\,\,\cdot
\prod_{j=1}^{\la_i-1}
\frac
{\,1-q^{\,\la_i-j+1}\,t^{\,\lap_j-i\,}}
{\,1-q^{\,\la_i-j}\,t^{\,\lap_j-i+1\,}}
\end{equation}
where $\mu$ ranges over all partitions such that 
the sequence $\mu_1,\mu_2,\ldots$ is obtained from $\la_1,\la_2,\ldots$
by decreasing one of its terms by $1$ and $i$ is
the index of the term. As usual, here $\lap=(\ts\lap_1,\lap_2,\ldots\ts)$ 
is the partition conjugate to $\la\ts$. 

Let us now define the linear operators $B^{\ts(1)},B^{\ts(2)},\,\ldots$
acting on $\La$ by setting
\begin{equation}
\label{svbu}
[\,p_1\ts,A(u)\ts]_{\ts q}
=-\,u\,B(u)\,(1-q)/(1-t)
\end{equation}
where
$$
B(u)={B^{\ts(1)}}/{(\ts u\,;t^{-1}\ts)_1}+
{B^{\ts(2)}}/{(\ts u\,;t^{-1}\ts)_2}+\ldots\,.
$$
At the left hand side of \eqref{svbu} we have
the $q\ts$-\textit{commutator\/} $p_1\ts A(u)-q\,A(u)\,p_1\,$.

Further, define the linear operators $C^{\ts(1)},C^{\ts(2)},\,\ldots$
acting on $\La$ by setting
\begin{equation}
\label{svcu}
[\,A(u)\ts,\dd/\dd\ts p_1\ts]_{\ts q}=-\,u\,C(u)
\end{equation}
where
$$
C(u)={C^{\ts(1)}}/{(\ts u\,;t^{-1}\ts)_1}+
{C^{\ts(2)}}/{(\ts u\,;t^{-1}\ts)_2}+\ldots\,.
$$
Then by the definitions of $B(u)$ and $C(u)$ 
and by \eqref{pnast} we have the relation
\begin{equation}
\label{bucu}
B(u)^{\ts\ast}\ns=C(u)\,.
\end{equation}

Our theorem provides explicit expressions for the operators
$B^{\ts(1)},B^{\ts(2)},\,\ldots$ and 
$C^{\ts(1)},C^{\ts(2)},\,\ldots$
which we state as the following corollary.
The corollary will allow 
to explicitly construct the elementary step 
operators  for Macdonald symmetric functions, see the equalities 
\eqref{bshift}~and~\eqref{cshift} below. 

\begin{cor}
For every $k=0,1,2,\ldots$ we have the equalities
\begin{align}
\label{bk}
B^{\ts(k+1)}&=\,
t^{\ts-k}\sum_{\ell(\mu)=k}\,
q^{\,-\mu_1-\mu_2-\ts\ldots}\,Q_{\mu\,\sqcup\ts1}\ts P_\mu^{\,\ast}\,,
\\[2pt]
\label{ck}
C^{\ts(k+1)}&=\,
t^{\ts-k}\sum_{\ell(\mu)=k}\,
q^{\,-\mu_1-\mu_2-\ts\ldots}\,
P_\mu\,Q_{\mu\,\sqcup\ts1}^{\,\ast}
\end{align}
where $\mu$ ranges over all partitions of length $k$ 
and $\mu\sqcup1$ denotes the partition obtained from $\mu$
by appending one extra part~$1\ts$.
\end{cor}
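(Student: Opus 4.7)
The plan is to derive the corollary from the theorem by the symbol/reproducing\ts-kernel technique, reducing the problem to a Pieri\ts-type identity for Hall\ts-Littlewood symmetric functions.

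I would first reduce to one of the two formulas. Taking adjoints of \eqref{svbu}, and using that $A(u)^{\ts\ast}=A(u)$ together with $p_1^{\ts\ast}=\tfrac{1-q}{1-t}\,\dd/\dd p_1$ from \eqref{pnast}, recovers \eqref{svcu}; meanwhile the proposed $C^{\ts(k+1)}$ is precisely the adjoint of the proposed $B^{\ts(k+1)}$, since $(Q_{\mu\sqcup 1}\,P_\mu^{\ts\ast})^{\ts\ast}=P_\mu\,Q_{\mu\sqcup 1}^{\ts\ast}$. So it suffices to prove \eqref{ck}. Applying \eqref{svcu} to the reproducing kernel $\Pi$, and invoking the lemma together with the theorem, one finds $A(u)\,\Pi=\Sigma(u)\,\Pi$ with
\be
\Sigma(u)\,=\,1+\sum_{k\ge 1}\frac{1}{(u;t^{-1})_k}\sum_{\ell(\la)=k}q^{\,-|\la|}\,Q_\la(x)\,P_\la(y)\,.
\ee
Differentiating the explicit form of $\Pi$ in \eqref{kernel} yields $\dd\Pi/\dd p_1(x)=\tfrac{1-t}{1-q}\,p_1(y)\,\Pi$, and a short calculation turns the $q$\ts-commutator on the left of \eqref{svcu} into
\be
-\,u\,C(u)\,\Pi/\Pi\,=\,(1-t)\,p_1(y)\,\Sigma(u)\,-\,q\,\dd\Sigma(u)/\dd p_1(x)\,.
\ee

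The next step is the rational identity
\be
\frac{u\,t^{-k}}{(u;t^{-1})_{k+1}}\,=\,\frac{1}{(u;t^{-1})_{k+1}}\,-\,\frac{1}{(u;t^{-1})_k}\,,
\ee
which rewrites the proposed $C(u)\,\Pi/\Pi=\sum_{k\ge 0}t^{\ts-k}\,C_k/(u;t^{-1})_{k+1}$ as $-\ts C_0+\sum_{k\ge 1}(C_{k-1}-C_k)/(u;t^{-1})_k$, where $C_k=\sum_{\ell(\mu)=k}q^{\,-|\mu|}\,P_\mu(x)\,Q_{\mu\sqcup 1}(y)$. Matching coefficients of $1/(u;t^{-1})_k$ in the symbol identity reduces \eqref{ck} to
\be
C_{k-1}-C_k\,=\,q\,\dd A_k/\dd p_1(x)\,-\,(1-t)\,p_1(y)\,A_k\qquad(\heartsuit)
\ee
for every $k\ge 1$, where $A_k=\sum_{\ell(\la)=k}q^{\,-|\la|}Q_\la(x)\,P_\la(y)$; the constant terms match separately via $C_0=Q_1(y)=(1-t)\,p_1(y)$.

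Finally I would prove $(\heartsuit)$ by expanding both sides in the basis $\{P_\nu(y)\}$ of the ring of symmetric functions in $y$ and comparing coefficients. The three ingredients are: the Hall\ts-Littlewood Pieri rule $p_1(y)\,P_\la(y)=\sum_\nu\psi'_{\nu/\la}(t)\,P_\nu(y)$; the identity $\dd Q_\nu(x)/\dd p_1(x)=(1-t)\sum_{|\nu/\la|=1}\psi'_{\nu/\la}(t)\,Q_\la(x)$, obtained by applying $\dd/\dd p_1(x)$ to both sides of \eqref{hlkernel}; and the explicit values $\psi'_{(\mu\sqcup 1)/\mu}(t)=(1-t^{\,k_1(\mu)+1})/(1-t)$ together with $b_{\mu\sqcup 1}(t)/b_\mu(t)=1-t^{\,k_1(\mu)+1}$. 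The matching splits into cases according to whether $\ell(\nu)$ equals $k$ or $k+1$ and whether the smallest part of $\nu$ equals $1$: when it is at least $2$ the identity collapses to the kernel\ts-derivative identity above, while when $\nu=\mu\sqcup 1$ it reduces to the direct comparison of the $b$\ts- and $\psi'$\ts-factors just stated. I expect the main obstacle to be the case\ts-by\ts-case bookkeeping for $(\heartsuit)$, and in particular tracing the factor $t^{\ts-k}$ of the corollary back through the partial\ts-fraction identity so as to keep the partition lengths correctly aligned throughout.
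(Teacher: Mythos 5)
Your proposal is correct and follows essentially the same route as the paper: the paper likewise reduces one of the two formulas to the other by adjointness, computes the $q\ts$-commutator using the Hall\ts-Littlewood Pieri rule for $p_1$ together with its dual for $\dd/\dd\ts p_1$ (your identity $(\heartsuit)$ is precisely the symbol, via the Lemma of Subsection 2.5, of the operator identity the paper establishes), exploits the same cancellation of the length\ts-preserving terms, and finishes with the same partial\ts-fraction identity $(\ts u\,;t^{-1}\ts)_k^{\ts-1}-(\ts u\,;t^{-1}\ts)_{k+1}^{\ts-1}=-\,u\,t^{\ts-k}\,(\ts u\,;t^{-1}\ts)_{k+1}^{\ts-1}$. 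The only cosmetic differences are that the paper derives \eqref{bk} directly at the operator level rather than \eqref{ck} at the symbol level, and encodes the Pieri data through the coefficients $\psi_{\la\mu}(t)$ of $\dd\ts P_\la/\dd\,p_1$ together with $\psi_{\mu\,\sqcup\ts1,\ts\mu}(t)=1$ instead of your $\psi'$- and $b$-factors.
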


\begin{proof}
The stated equalities \eqref{bk} and \eqref{ck}
follow from each other due to the relation \eqref{bucu}.
We shall derive the first of the two equalities from our theorem.

Recall that at $q=0$ the Macdonald symmetric function $M_\la$
specializes to the Hall\ts-Littlewood symmetric function $P_\la\ts$.
Hence the expression for $\dd\ts M_\la/\dd\,p_1$ given above
implies 
\begin{equation}
\label{dp}
{\dd\ts P_\la}\ts/{\dd\,p_1}=
\sum_\mu\,\psi_{\la\mu}(t)\,P_\mu
\end{equation}
where $\mu$ ranges over all partitions such that 
the sequence $\mu_1,\mu_2,\ldots$ is obtained from $\la_1,\la_2,\ldots$
by decreasing one of its terms by $1\ts$. Let $i$ is the index of that term.
If $\la_i=1$ then the coefficient \eqref{clm} is $1\ts$. Then
$\psi_{\la\mu}(t)=1$ in particular. If $\la_i>1$ and $q=0$ then
the only factor in the two products over the indices
$j$ in \eqref{clm} comes from the first product
and corresponds to $j=\la_i-1=\mu_i\,$. Then 
$\psi_{\la\mu}(t)=1-t^{\,m}$ where $m=\lap_j-i+1$
is the multiplicity of the part $\mu_i$ in $\mu\,$.
But we will not use any explicit expression for the coefficient
$\psi_{\la\mu}(t)$ with~$\la_i>1\ts$.

By [III.4.9] the equality \eqref{dp} 
established above is equivalent to the equality
\begin{equation}
\label{pq}
(1-t)\ts\,p_1\ts Q_\mu=
\sum_\la\,\psi_{\la\mu}(t)\,Q_\la
\end{equation}
where $\la$ ranges over all partitions such that 
the sequence $\la_1,\la_2,\ldots$ is obtained from $\mu_1,\mu_2,\ldots$
by increasing one of its terms by $1\ts$. 
The latter equality can also be derived from
the multiplication formula \eqref{aom} below, by setting $n=1$~there.

Now for any $k\ge1$ consider
the $q\ts$-commutator $[\,p_1\ts,A(u)\ts]_{\ts q}\,$.
By \eqref{basic}
\begin{align*}
p_1\ts A^{\ts(k)}\ts(1-t)
&=\sum_{\ell(\mu)=k}
q^{\,-\mu_1-\mu_2-\ts\ldots}\,(1-t)\ts\,p_1\,Q_\mu\,P_\mu^{\,\ast}
\\
&=\sum_{\ell(\mu)=k}\sum_\la\,\,
q^{\,-\mu_1-\mu_2-\ts\ldots}\,\psi_{\la\mu}(t)\ts\,Q_\la\,P_\mu^{\,\ast}
\end{align*}
where we use the notation of \eqref{pq}. 
Further, by \eqref{basic} and \eqref{dp}
\begin{align}
\nonumber
q\,\ts [\,A^{\ts(k)},\ts p_1\,]\,(1-t)/(1-q)
&=\sum_{\ell(\la)=k}
q^{\,1-\la_1-\la_2-\ts\ldots}\,Q_\la\,
[\,P_\la^{\,\ast},\ts p_1\ts]\,(1-t)/(1-q)
\\
\nonumber
&=\sum_{\ell(\la)=k}
q^{\,1-\la_1-\la_2-\ts\ldots}\,Q_\la\,
[\ts\,p_1^{\ts\ast}\ts,P_\la\,]^{\ts\ast}\,(1-t)/(1-q)
\\
\nonumber
&=\sum_{\ell(\la)=k}
q^{\,1-\la_1-\la_2-\ts\ldots}\,Q_\la\,
(\,{\dd\ts P_\la}\ts/{\dd\,p_1})^{\ts\ast}
\\
\label{lamusum}
&=\sum_{\ell(\la)=k}\sum_\mu\,\,
q^{\,-\mu_1-\mu_2-\ts\ldots}\,\psi_{\la\mu}(t)\,Q_\la\,P_\mu^{\,\ast}
\end{align}
where the square brackets stand for the usual operator commutator. Hence
\begin{gather*}
[\,p_1\ts,A(u)\ts]_{\ts q}\,(1-t)/(1-q)=
p_1\ts A^{\ts(k)}\ts(1-t)-q\,\ts [\,A^{\ts(k)},\ts p_1\,]\,(1-t)/(1-q)=
\\[6pt]
\sum_{\ell(\mu)=k}
q^{\,-\mu_1-\mu_2-\ts\ldots}\,Q_{\mu\,\sqcup\ts1}\ts P_\mu^{\,\ast}-
\sum_{\ell(\mu)=k-1}
q^{\,-\mu_1-\mu_2-\ts\ldots}\,Q_{\mu\,\sqcup\ts1}\ts P_\mu^{\,\ast}
\end{gather*}
where we use the equality
$\phi_{\mu\,\sqcup\ts1,\ts\mu}(t)=1\ts$. The definition \eqref{svbu}
now implies that 
\begin{align*}
-\,u\,B(u)
=(1-t)\,p_1\,
&+\,\sum_{k=1}^\infty\,
\sum_{\ell(\mu)=k}\,
q^{\,-\mu_1-\mu_2-\ts\ldots}\,Q_{\mu\,\sqcup\ts1}\ts P_\mu^{\,\ast}
\ts/\ts{(\ts u\,;t^{-1}\ts)_k}
\\
&-\ \sum_{k=1}^\infty\,
\sum_{\ell(\mu)=k-1}\,
q^{\,-\mu_1-\mu_2-\ts\ldots}\,Q_{\mu\,\sqcup\ts1}\ts P_\mu^{\,\ast}
\ts/\ts{(\ts u\,;t^{-1}\ts)_k}
\end{align*}

\begin{align*}
\\[-40pt]
&=\ \sum_{k=0}^\infty\,
\sum_{\ell(\mu)=k}\,
q^{\,-\mu_1-\mu_2-\ts\ldots}\,Q_{\mu\,\sqcup\ts1}\ts P_\mu^{\,\ast}
\ts/\ts{(\ts u\,;t^{-1}\ts)_k}
\\
&-\ \sum_{k=0}^\infty\,
\sum_{\ell(\mu)=k}\,
q^{\,-\mu_1-\mu_2-\ts\ldots}\,Q_{\mu\,\sqcup\ts1}\ts P_\mu^{\,\ast}
\ts/\ts{(\ts u\,;t^{-1}\ts)_{k+1}}\,.
\end{align*}
The required equality \eqref{bk} now follows from the relation
$$
(\ts u\,;t^{-1}\ts)_k^{\ts-1}-(\ts u\,;t^{-1}\ts)_{k+1}^{\ts-1}=
-\,u\,t^{\ts-k}\,(\ts u\,;t^{-1}\ts)_{k+1}^{\ts-1}\,.
\eqno{\square}
$$
\end{proof}

Note that in the infinite product over the indices $i$ 
at the right hand side of the equality \eqref{aigen} 
the only factors different from $1$ are those
corresponding to $i=1\lcd\ell(\la)\ts$.
For any such index $i$ consider the product
\begin{equation}
\label{iskip}
\frac{t^{\,1-i}}{1-u\,t^{\,1-i}}\,\ts
\prod_{\substack{j=1\\j\neq\ts i\ts}}^{\ell(\la)}\,
\frac{\,q^{\ts-\la_j}-u\,t^{\,1-j}}{1-u\,t^{\,1-j}}\,\,.
\end{equation}
It follows from \eqref{aigen} and from the definition \eqref{svbu}
that for any given partition~$\mu$
\begin{equation}
\label{bmula}
B(u)\,M_\mu=\sum_{\la}\,B_{\ts\la\mu}(u)\,M_\la
\end{equation}
where $B_{\ts\la\mu}(u)$ equals the product of \eqref{bml} by \eqref{iskip}
and by $1-t\,$,
while $\la$ ranges over all partitions such that 
the sequence $\la_1,\la_2,\ldots$ is obtained from $\mu_1,\mu_2,\ldots$
by increasing one of its terms by $1$ and $i$ is
the index of the term. 

Similarly, \eqref{aigen} and \eqref{svcu} imply that
for any given $\la$
\begin{equation}
\label{clamu}
C(u)\,M_\la=\sum_{\mu}\,C_{\mu\la}(u)\,M_\mu
\end{equation}
where $C_{\mu\la}(u)$ equals the product of \eqref{clm} by \eqref{iskip}
and by $1-q\,$,
while $\mu$ ranges over all partitions such that 
the sequence $\mu_1,\mu_2,\ldots$ is obtained from $\la_1,\la_2,\ldots$
by decreasing one~of its terms by $1$ and $i$ is
the index of the term.
 
Now let the partition $\la$ be fixed. Then for 
the indices $i=1\lcd\ell(\la)$ all the elements 
$q^{\ts-\la_i}\ts t^{\,i-1}$ of the field $\QQ(q,t)$ are pairwise distinct.
Therefore by \eqref{bmula}
for the partition $\mu$ corresponding to any of these indices $i$ we have
\begin{equation}
\label{bshift}
B(\ts q^{\ts-\la_i}\ts t^{\,i-1}\ts)\,M_\mu=
B_{\ts\la\mu}(\ts q^{\ts-\la_i}\ts t^{\,i-1}\ts)\,M_\la
\end{equation}
where the coefficient $B_{\ts\la\mu}(\ts q^{\ts-\la_i}\ts t^{\,i-1}\ts)$
is the product of \eqref{bml} by $1-t$ and by
\begin{equation}
\label{iskipla}
t^{\,1-i}\,
\prod_{j=1}^{\ell(\la)}
\ts\frac1{\ts q^{\,\la_i}-t^{\,i-j}\ts}\,
\prod_{\substack{j=1\\j\neq\ts i\ts}}^{\ell(\la)}\,
(\ts q^{\,\la_i-\la_j}-t^{\,i-j}\ts)\,.
\end{equation}
The left hand side of the equality \eqref{bshift} should be understood
as the value in $\La$ of the rational function $B(u)\,M_\mu$ at the point
$u=q^{\ts-\la_i}\ts t^{\,i-1}\ts$. Similarly, by~\eqref{clamu}
\begin{equation}
\label{cshift}
C(\ts q^{\ts-\la_i}\ts t^{\,i-1}\ts)\,M_\la=
C_{\mu\la}(\ts q^{\ts-\la_i}\ts t^{\,i-1}\ts)\,M_\mu
\end{equation}
where 
$C_{\mu\la}(\ts q^{\ts-\la_i}\ts t^{\,i-1}\ts)$
is the product of \eqref{clm} by $1-q$ and by \eqref{iskipla}.

\newpage

Our definitions \eqref{svbu} and \eqref{svcu}
of the series $B(u)$ and $C(u)$ 
are motivated by the results from \cite[Sec.1]{SV}.
But our definitions employ
the $q\ts$-commutators of the operators 
$p_1$ and $\dd/\dd\ts p_1$
with $A(u)\,$, while in \cite{SV} the usual commutators
have been employed.
Our theorem also provides 
analogues of the equalities \eqref{bk} and \eqref{ck}
for the usual commutators of 
$p_1$ and $\dd/\dd\ts p_1$
with $A(u)\,$.
These analogues however involve summation over the pairs
of partitions $\la$ and $\mu\ts$, see \eqref{lamusum} above.


\section{Proof of the theorem}
\label{section::theproof}


\subsection{Reduction of the proof}

In this subsection we will reduce the proof of our theorem
to a certain determinantal identity
for each $N=1,2,\ldots\,\,$.
By the lemma from Subsection \ref{reproker}
the theorem is equivalent to the equality
\begin{equation*}
\label{syminf}
A(u)(\Pi)/\Pi\,=\,
\sum_\la\,
q^{\,-\la_1-\la_2-\ts\ldots}\,
Q_\la(\ts x_1,x_2,\ldots\ts)\,P_\la(\ts y_1,y_2,\ldots\ts)
/
(\ts u\,;t^{-1}\ts)_{\ell(\la)}
\end{equation*} 
where the coefficients of the series 
$A(u)$ are regarded as operators acting on the
symmetric functions in the variables $x_1,x_2,\ldots\ $.
Here we let the $\la$ range over all partitions of $0,1,2\ldots$
and assume that $(\ts u\,;t^{-1}\ts)_{\ts0}=1\,$.

It suffices to prove for $N=1,2,\ldots$
the restriction of the required functional equality to 
\begin {equation}
\label{xrest}
x_{N+1}=x_{N+2}=\ldots=0\,.
\end{equation}
By the definition of $A(u)$
the restriction of $A(u)(\Pi)/\Pi$
to \eqref{xrest} as of a function 
in $x_1,x_2,\ldots$ equals 
\begin{equation}
\label{pmsp}
A_N(u)(\ts\Pi_N)/\Pi_N
\end{equation}
where we denote
$$
\Pi_N\,=\,
\prod_{i=1}^N\, 
\prod_{j=1}^\infty\,\ts 
\frac{(\ts t\,x_i\ts y_j\ts;q\ts)_\infty}{(\ts x_i\ts y_j\ts;\ts q\ts)_\infty}
\ .
$$ 
By the definition of the 
symmetric function 
$Q_\la(\ts x_1,x_2,\ldots\ts)$ its restriction to \eqref{xrest}
is $Q_\la(\ts x_1\lcd x_N)$ if $\ell(\la)\le N$
and vanishes if $\ell(\la)>N\ts$. 
Hence the restriction of the right hand side of
the required functional equality to \eqref{xrest} is
\begin{equation}
\label{symfininf}
\sum_{\ell(\la)\le N}
q^{\,-\la_1-\la_2-\ts\ldots}\,
Q_\la(\ts x_1\lcd x_N)\,P_\la(\ts y_1,y_2,\ldots\ts)
/
(\ts u\,;t^{-1}\ts)_{\ell(\la)}\,.
\end{equation} 
Due to [VI.2.19]
to prove the equality between \eqref{pmsp} and \eqref{symfininf}
it suffices to set 
$$
y_{N+1}=y_{N+2}=\ldots=0\,.
$$
However we will keep working with the infinite
collection of variables $y_1,y_2,\ldots\ \ts$.
This will simplify the induction argument
in the next subsection. Note that by replacing
in \eqref{symfininf} each variable $x_i$ with $q\ts x_i$ we get 
a sum independent of $q\,$:
\begin{equation}
\label{symq}
\sum_{\ell(\la)\le N}\,
Q_\la(\ts x_1\lcd x_N)\,P_\la(\ts y_1,y_2,\ldots\ts)
/
(\ts u\,;t^{-1}\ts)_{\ell(\la)}\,.
\end{equation} 

Let us compute the function \eqref{pmsp}.
This function depends on the variable $u$ rationally. 
It is also symmetric in 
either of the two collections of variables
$x_1\lcd x_N$ and $y_1,y_2,\ldots\ \ts$. 
It can be obtained
by applying to the identity function $1$
the result of conjugating $A_N(u)$ by the operator of
multiplication by $\Pi_N\ts$, see also \cite[Sec.\,1]{KN}
for a similar argument. By the definition \eqref{dnu} we have
$$
(\ts T_1\ldots T_N)^{\ts-1}\ts{D_N(u)}\,=\,
\De\ts(\ts x_1\lcd x_N)^{-1}\cdot
\det\Big[\,
x_i^{\,N-j}\bigl(\ts T_i^{\,-1}-u\,t^{\,1-j}\ts\bigr)
\Big]{\phantom{\big[}\!\!}_{i,j=1}^N\,.
$$
The last determinant is defined as the alternated~sum
$$
\sum_{\si\in\Sg_N}
(-1)^{\si}\,
\prod_{i=1}^N\,\,\bigl(\,
x_i^{\,N-\si(i)}\bigl(\ts T_i^{\,-1}-u\,t^{\,1-\si(i)}\ts\bigr)\bigr)\,.
$$
Conjugating this sum by $\Pi_N$ amounts to replacing
every $T_i^{\,-1}$ by its conjugate
$$
\prod_{l=1}^\infty\ts
\frac{\,1-q^{\ts-1}\ts t\,x_i\ts y_{\ts l}}
{\,1-q^{\ts-1}\ts x_i\ts y_{\ts l}\ \,}
\ts\cdot\ts T_i^{\,-1}\,,
$$
see the definition \eqref{usual}. Hence we get the sum
$$
\sum_{\si\in\Sg_N}
(-1)^{\si}\,
\prod_{i=1}^N\,\,\Bigl(\,
x_i^{\,N-\si(i)}\ts\Bigl(\,\, 
\prod_{l=1}^\infty\ts
\frac{\,1-q^{\ts-1}\ts t\,x_i\ts y_{\ts l}}
{\,1-q^{\ts-1}\ts x_i\ts y_{\ts l}\ \,}
\ts\cdot\ts T_i^{\,-1}
-u\,t^{\,1-\si(i)}\ts\Bigr)\Bigr)\,.
$$
Here in any single summand
each of the factors corresponding to
$i=1\lcd N$ does not depend on the variables $x_j$ with $j\neq i\,$.
Therefore applying the latter operator sum to 
$1$ amounts to simply deleting each $T_i^{\,-1}$. Thus we get the function
\begin{gather*}
\sum_{\si\in\Sg_N}
(-1)^{\si}\,
\prod_{i=1}^N\,\,\Bigl(\,
x_i^{\,N-\si(i)}\ts\Bigl(\,\, 
\prod_{l=1}^\infty\ts
\frac{\,1-q^{\ts-1}\ts t\,x_i\ts y_{\ts l}}
{\,1-q^{\ts-1}\ts x_i\ts y_{\ts l}\ \,}
-u\,t^{\,1-\si(i)}\ts\Bigr)\Bigr)=
\\[4pt]
\det\Big[\,
x_i^{\,N-j}\ts\Bigl(\, 
\prod_{l=1}^\infty\ts 
\frac{\,1-q^{\ts-1}\ts t\,x_i\ts y_{\ts l}}
{\,1-q^{\ts-1}\ts x_i\ts y_{\ts l}\ \,}
-u\,t^{\,1-j}\ts\Bigr)\ts
\Big]{\phantom{\big[}\!\!}_{i,j=1}^N\,.
\end{gather*}
Dividing by $\De\ts(\ts x_1\lcd x_N)$
and then replacing each variable $x_i$ with $q\ts x_i$~we~get
\begin{equation}
\label{symp}
\De\ts(\ts x_1\lcd x_N)^{-1}\cdot
\det\Big[\,
x_i^{\,N-j}\ts\Bigl(\, 
\prod_{l=1}^\infty\ts 
\frac{\,1-t\,x_i\ts y_{\ts l}}{\,1-x_i\ts y_{\ts l}\ \,}
-u\,t^{\,1-j}\ts\Bigr)\ts
\Big]{\phantom{\big[}\!\!}_{i,j=1}^N\,.
\end{equation}

Thus to prove our theorem it suffices to show
that for $N=1,2,\ldots$
the sum \eqref{symq}
is equal to the ratio \eqref{symp} divided by $(\ts u\,;t^{-1})_N\ts$,
see the definition \eqref{cnu}.
In the next subsection we will reduce the proof of this
equality to a family
of certain identities for symmetric polynomials
in the single collection of variables $x_1\lcd x_N\ts$.
These identities will correspond to 
partitions $\la$ with $0<\ell(\la)<N\,$.


\subsection{Further reduction}

Let us consider the last determinant in \eqref{symp}.
We will be proving by induction on $N$ that this
determinant is equal to the sum \eqref{symq}
multiplied by the Vandermonde polynomial
$\De\ts(\ts x_1\lcd x_N)$ and by $(\ts u\,;t^{-1})_N\ts$.

If $N=0$ then
there is only one term in the sum \eqref{symq}, and this term
is $1\,$. The leading term of the series $A(u)$ is also $1\,$.
Hence we can use the case $N=0$ as the induction base.  
Now take any $N\ge1$ and suppose that
the required equality holds for $N-1$ instead of $N\,$.
For each $i=1\lcd N$ we will for short denote
$$
\De^{(i)}=
\De\ts(\ts x_1\lcd\xh_i\lcd x_N)
$$
where as usual the symbol $\xh_i$ indicates the omitted variable.
Similarly, for any partition $\mu$ with $\ell(\mu)<N$ we will for short denote
$$
Q_\mu^{\ts(i)}=
Q_\mu(x_1\lcd\xh_i\lcd x_N)\ts.
$$

Due to \eqref{qprod} the infinite product over the index 
$l$ in \eqref{symp} equals the sum
$$
1+\sum_{n=1}^\infty\,Q_{n}(\ts y_1,y_2,\ldots\ts)\,x_i^{\ts n}\ts.
$$
Therefore by expanding the last determinant in \eqref{symp}
in its first column and~then employing the induction assumption 
where $u$ and $\la$ are replaced with $u\,t^{-1}$ and $\mu$
respectively, we get the sum
\begin{gather}
\notag
\sum_{i=1}^N\ 
(-1)^{\ts i+1}\,x_i^{\,N-1}\,
\Bigl(\ts1-u+\sum_{n=1}^\infty\,Q_{n}(\ts y_1,y_2,\ldots\ts)\,
x_i^{\ts n}\Bigr)\,
\De^{(i)}
\,\times
\\[4pt]
\label{sumone}
\ \sum_{\ell(\mu)<N}\,
Q_\mu^{\ts(i)}\,P_\mu(\ts y_1,y_2,\ldots\ts)
\!\!\prod_{\ell(\mu)<l<N}\!\!\bigl(\ts1-u\,t^{\ts-l}\ts\bigr)\,. 
\end{gather}

Let us open the brackets in the first of the two lines of the display 
\eqref{sumone} and use the multiplication formula due to Morris \cite{Morris}
\begin{equation}
\label{aom}
Q_{n}(\ts y_1,y_2,\ldots\ts)\,P_\mu(\ts y_1,y_2,\ldots\ts)=
\sum_\la\,\phi_{\la\mu}(t)\,P_\la(\ts y_1,y_2,\ldots\ts)\,,
\end{equation}
see also [III.5.7]. Here $\phi_{\la\mu}(t)\neq0$ only if
\begin{equation}
\label{between}
\la_1\ge\mu_1\ge\la_2\ge\mu_2\ge\ldots
\end{equation}
and
\begin{equation}
\label{lamun}
\la_1-\mu_1+\la_2-\mu_2+\ldots=n\,.
\end{equation}
Then in the notation 
\eqref{cela} the coefficient $\phi_{\la\mu}(t)$ is the product of the 
differences $1-t^{\ts k_i}$ taken over all the
indices $i\ge1$ such that
\begin{equation*}
\label{phiprod}
\lap_i-\mup_i>\lap_{i+1}-\mup_{i+1}
\end{equation*}
where 
$\lap=(\ts\lap_1,\lap_2,\ldots\ts)$ 
and 
$\mup=(\ts\mup_1,\mup_2,\ldots\ts)$
are the conjugate partitions.

In the proof of our theorem we will not
use this explicit expression for the coefficient
$\phi_{\la\mu}(t)\,$. We have reproduced it here for
the sake of completeness. We will use only the fact that
the inequality $\phi_{\la\mu}(t)\neq0$ implies
\eqref{between} and \eqref{lamun}.
We also note that for any fixed partition $\la$ with $\ell(\la)\le N$
and any index $i=1\lcd N$ the multiplication formula \eqref{aom}
implies the decomposition formula
\begin{equation}
\label{mao}
Q_\la(\ts x_1\lcd x_N)\,=\sum_{\ell(\mu)<N}
\phi_{\la\mu}(t)\,x_i^{\ts n}\,Q_\mu^{\ts(i)}
\end{equation}
where the sum is taken over all partitions
$\mu$ with $\ell(\mu)<N$ while
$n$ is determined by the equality \eqref{lamun},
see for instance [III.5.5]~and~[III.5.14].

Using the multiplication formula \eqref{aom}, the sum \eqref{sumone} equals
\begin{gather}
\notag
\sum_{i=1}^N\,\sum_{\ell(\mu)<N}\,
(-1)^{\ts i+1}\,x_i^{\,N-1}\,\De^{(i)}\,Q_\mu^{\ts(i)}
P_\mu(\ts y_1,y_2,\ldots\ts)\,(\ts1-u\ts)
\!\!\!\prod_{\ell(\mu)<l<N}\!\!\!(\ts1-u\,t^{\ts-l}\ts)\ +
\\
\notag
\sum_{i=1}^N\,\sum_{\ell(\mu)<N}\,\sum_{n=1}^\infty
(-1)^{\ts i+1}\,x_i^{\,N-1+n}\,\De^{(i)}\,Q_\mu^{\ts(i)}
\!\!\!\prod_{\ell(\mu)<l<N}\!\!\!(\ts1-u\,t^{\ts-l})\ \times
\\[4pt]
\label{sumtwo}
\sum_\la\,\phi_{\la\mu}(t)\,P_\la(\ts y_1,y_2,\ldots\ts)\,.
\end{gather}
Note that under the condition \eqref{between}
the inequality $n\ge1$ is equivalent to $\la\neq\mu\,$.
Also note that under the condition \eqref{between}
the length $\ell(\la)$ is equal to $\ell(\mu)$ or to $\ell(\mu)+1\,$.
In particular, if $\phi_{\la\mu}(t)\neq0$ in \eqref{sumtwo} 
then $\ell(\la)\le N\ts$. 

Let us now fix any partition $\la$ with $\ell(\la)\le N$ and compare 
the coefficients at $P_\la(\ts y_1,y_2,\ldots\ts)$ in the sum 
displayed in the three lines \eqref{sumtwo}, and in the sum \eqref{symq}
multiplied by $\De\ts(\ts x_1\lcd x_N)$ and $(\ts u\,;t^{-1})_N\ts$.
The latter coefficient always~equals
\begin{equation}
\label{qco}
\De\ts(\ts x_1\lcd x_N)\,Q_\la(\ts x_1\lcd x_N)
\!\!\!\prod_{\ell(\la)\le l<N}\!\!\!(\ts1-u\,t^{\ts-l})\,.
\end{equation}
But when taking the coefficient
in \eqref{sumtwo} we will separately consider three cases.

First suppose that $\ell(\la)=0\,$. In this case there is no
partition $\mu$ satisfying the condition \eqref{lamun} with
$n\ge1\,$. By setting $\ell(\mu)=0$ in the first line of \eqref{sumtwo}
we~get 
$$
\sum_{i=1}^N\,
(-1)^{\ts i+1}\,x_i^{\,N-1}\,\De^{(i)}
\prod_{0\le l<N}(\ts1-u\,t^{\ts-l})
$$
which equals \eqref{qco} with $\ell(\la)=0\,$.
Hence the two coefficients are the same here. 

Next suppose that $\ell(\la)=N\,$. Then
the first line of \eqref{sumtwo} does not contribute
to the coefficient at $P_\la(\ts y_1,y_2,\ldots\ts)$
since $\ell(\mu)<N$ in that line. 
Consider the last two lines of \eqref{sumtwo}.
If $\phi_{\la\mu}(t)\neq0$ there then $\ell(\mu)=N-1$
by the condition $\eqref{between}$, so that
the product over 
$l$ is actually $1\,$. Then the inequality $n\ge1$ holds 
since $\ell(\mu)<\ell(\la)\ts$. Thus
the coefficient at $P_\la(\ts y_1,y_2,\ldots\ts)$
with $\ell(\la)=N$ in \eqref{sumtwo}~equals 
\begin{equation}
\label{nco}
\sum_{i=1}^N\,
\sum_{\ell(\mu)<N}
(-1)^{\ts i+1}\,x_i^{\,N-1+n}\,\De^{(i)}\,Q_\mu^{\ts(i)}\,
\phi_{\la\mu}(t)\,.
\end{equation}
Using the decomposition formula \eqref{mao}, the last displayed sum equals
$$
\De\ts(\ts x_1\lcd x_N)\,Q_\la(\ts x_1\lcd x_N)
$$
and hence coincides with \eqref{qco} in the case $\ell(\la)=N$
under our consideration.

Finally let $0<\ell(\la)<N\ts$. Then the coefficient at 
$P_\la(\ts y_1,y_2,\ldots\ts)$ in \eqref{sumtwo} is
\begin{gather}
\notag
\sum_{i=1}^N\,
(-1)^{\ts i+1}\,x_i^{\,N-1}\,\De^{(i)}\,Q_\la^{\ts(i)}\,(\ts1-u\ts)
\!\!\!\prod_{\ell(\la)<l<N}\!\!\!(\ts1-u\,t^{\ts-l}\ts)\ +
\\
\label{sumthree}
\sum_{i=1}^N\,\sum_{\substack{\ell(\mu)<N\\n\ge1}}
(-1)^{\ts i+1}\,x_i^{\,N-1+n}\,\De^{(i)}\,Q_\mu^{\ts(i)}\,\phi_{\la\mu}(t)
\!\!\!\prod_{\ell(\mu)<l<N}\!\!\!(\ts1-u\,t^{\ts-l})\ .
\end{gather}
The sum displayed in the first of the above two lines 
can be rewritten as
\begin{gather}
\notag
\sum_{i=1}^N\,
(-1)^{\ts i+1}\,x_i^{\,N-1}\,\De^{(i)}\,Q_\la^{\ts(i)}
\!\!\!\prod_{\ell(\la)\le l<N}\!\!\!(\ts1-u\,t^{\ts-l}\ts)\ +
\\
\label{sumfour}
\sum_{i=1}^N\,
(-1)^{\ts i+1}\,x_i^{\,N-1}\,\De^{(i)}\,Q_\la^{\ts(i)}\,
(\ts u\,t^{\,-\ell(\la)}-u\ts)
\!\!\!\prod_{\ell(\la)<l<N}\!\!\!(\ts1-u\,t^{\ts-l}\ts)\ .
\end{gather}
Further, in the second line of the display \eqref{sumthree}
we may have $\phi_{\la\mu}(t)\neq0$ only if 
$\ell(\mu)$ equals $\ell(\la)$ or $\ell(\la)-1\,$.
Therefore the sum in that line can be rewritten as
\begin{gather}
\notag
\sum_{i=1}^N\,\sum_{\substack{\ell(\mu)=\ell(\la)\\\mu\neq\la}}
(-1)^{\ts i+1}\,x_i^{\,N-1+n}\,\De^{(i)}\,Q_\mu^{\ts(i)}\,\phi_{\la\mu}(t)
\!\!\!\prod_{\ell(\la)\le l<N}\!\!\!(\ts1-u\,t^{\ts-l})\ +
\\
\notag
\sum_{i=1}^N\,\sum_{\substack{\ell(\mu)=\ell(\la)\\\mu\neq\la}}
(-1)^{\ts i+1}\,x_i^{\,N-1+n}\,\De^{(i)}\,Q_\mu^{\ts(i)}\,\phi_{\la\mu}(t)
\,u\,t^{\,-\ell(\la)}
\!\!\!\prod_{\ell(\la)<l<N}\!\!\!(\ts1-u\,t^{\ts-l})\ +
\\
\label{sumfive}
\sum_{i=1}^N\,\sum_{\ell(\mu)<\ell(\la)}
(-1)^{\ts i+1}\,x_i^{\,N-1+n}\,\De^{(i)}\,Q_\mu^{\ts(i)}\,\phi_{\la\mu}(t)
\!\!\!\prod_{\ell(\la)\le l<N}\!\!\!(\ts1-u\,t^{\ts-l})\ .
\end{gather}

Using the decomposition formula \eqref{mao},
the sums displayed in the first line of \eqref{sumfour}
and in the first and the third lines of \eqref{sumfive}
add up to the product~\eqref{qco}. 
This product is the coefiicient at 
$P_\la(\ts y_1,y_2,\ldots\ts)$ in the sum \eqref{symq}
multiplied by $\De\ts(\ts x_1\lcd x_N)$ and $(\ts u\,;t^{-1})_N\ts$.
The sums displayed in the second line of
\eqref{sumfour} and in the second line of \eqref{sumfive}
should add up to zero. Let us multiply each of these two sums by
$t^{\,\ell(\la)}$ and divide them by their common factors $u$ and
$$
1-u\,t^{\ts-l}
\quad\ \text{where}\ \quad 
\ell(\la)<l<N\ts.
$$
The proof of our theorem thus reduces to the next combinatorial proposition.

\begin{pro}
For any fixed partition $\la$ with\/ $0<\ell(\la)<N$
we have the identity
\begin{equation*}
\label{theid}
\sum_{i=1}^N\,(-1)^{\ts i+1}\,x_i^{\,N-1}\De^{(i)}\ts
\Bigl(\ts
Q_\la^{\ts(i)}\,\bigl(\ts1-t^{\,\ell(\la)}\ts\bigr)
+
\!\!\!\sum_{\substack{\ell(\mu)=\ell(\la)\\\mu\neq\la}}\!\!\!
x_i^{\ts n}\,Q_\mu^{\ts(i)}\ts\phi_{\la\mu}(t)
\Bigr)=0
\end{equation*}
where $n$ is determined by the partitions $\la$ and $\mu$
via the equality \eqref{lamun}.
\end{pro}

Note that at the left hand side of the above identity 
we have a skew-symmetric polynomial 
in the variables $x_1\lcd x_N$ with the coefficients
from $\ZZ[t]$. Dividing it by the Vandermonde polynomial
$\De\ts(\ts x_1\lcd x_N)$ we get a
symmetric polynomial in $x_1\lcd x_N\ts$.
Our proposition states that the latter
polynomial is actually zero.


\subsection{Finishing the proof}

For any non-negative integer $n$ 
and for any partition $\mu$ with $\ell(\mu)<N$ consider the sum 
\begin{equation}
\label{isum}
\sum_{i=1}^N\,(-1)^{\ts i+1}\,x_i^{\,N-1+n}\De^{(i)}\ts Q_\mu^{\ts(i)}.
\end{equation}
If $n$ is determined by the equality \eqref{lamun}
for any fixed partition $\la$ with $\ell(\la)<N\ts$, 
then the left hand side of the identity in our proposition 
is a linear combination of the sums \eqref{isum} 
with the coefficients $1-t^{\,\ell(\la)}$ or
$\phi_{\la\mu}(t)$ if respectively $\mu=\la$ or $\mu\neq\la$ 
but $\ell(\mu)=\ell(\la)\,$. Note that 
$\phi_{\la\la}(t)=1\,$. 
In particular, $\phi_{\la\la}(t)\neq1-t^{\,\ell(\la)}\,$.

Denote by $F_{\mu,n}(x_1\lcd x_N)$
the alternated sum of $N\ts!$ products obtained from
\begin{equation}
\label{altpro}
x_1^{\,\mu_1}\ldots x_{N-1}^{\,\mu_{N-1}}\,x_N^{\,N-1+n}
\prod_{1\le i<j<N}(\,x_i-t\,x_j\ts)
\end{equation}
by permuting $x_1\lcd x_N\ts$.
Here we use the signs of permutations for alternation.
This sum is a skew-symmetric polynomial in $x_1\lcd x_N$ with
coefficients from $\ZZ[t]\ts$. 
By performing the summation
first over the permutations which map $x_N$ to $x_i$
and then over the indices $i=1\lcd N$ one shows that the product
\begin{equation}
\label{defmun}
(-1)^{\ts N+1}\ts
F_{\mu,n}(x_1\lcd x_N)\cdot 
b_\mu(t)/v_\mu(t)
\end{equation}
equals the sum \eqref{isum}.
Here one uses only the definition of the polynomial
$Q_\mu^{\ts(i)}$, see the beginning of Subsection \ref{subhl}.

Below we shall prove that if
$\mu$ satisfies the conditions $\ell(\mu)=\ell(\la)$ and \eqref{between}
while $n$ is determined by \eqref{lamun} then 
$F_{\mu,n}(x_1\lcd x_N)$ is a linear
combination of the products
\begin{equation}
\label{depnu}
\De\ts(\ts x_1\lcd x_N)\,P_\nu(\ts x_1\lcd x_N)
\end{equation}
where $\nu$ is a partition, $\ell(\nu)\le N$ 
and $\nu<\la$ in the natural 
ordering. Hence the arguments from the previous two subsections imply
that the difference between the left and right hand sides
of the required equality \eqref{basic} is a linear combination
of the operators $P_\nu\,P_\la^{\,\ast}$
where $\nu<\la\,$. However,
this difference must be self-adjoint relative to the 
bilinear form \eqref{macprod} on the vector space $\La\,$.
Indeed, the left hand side of \eqref{basic} is
self-adjoint by definition, 
while the right hand side is self-adjoint due to \eqref{qbp}.
Therefore the difference equals zero.

Now fix any 
$\mu$ satisfying the conditions $\ell(\mu)=\ell(\la)$ and
\eqref{between}. Determine the integer 
$n$ by \eqref{lamun}. Take any monomial in
the variables $x_1\lcd x_N$ resulting from opening
the brackets in \eqref{altpro}. 
Due to the alternation it suffices to consider only those monomials
where $x_1\lcd x_N$ occur with distinct degrees.
By rearranging these degrees in the descending order
we get a monomial
\begin{equation}
\label{numon}
x_1^{\,\nu_1+N-1}\ldots\, x_{N-1}^{\,\nu_{N-1}+1}\,x_N^{\,\nu_N}
\end{equation}
where $\nu_1\ge\ldots\ge\nu_N\ge0\,$. For any $k=1\lcd N-1$ the sum
of the first $k$ degrees
$$
(\ts\nu_1+N-1\ts)+\ldots+(\ts\nu_k+N-k\ts)
$$
does not exceed the maximum of the following two sums\ts:
\begin{equation}
\label{firstsum}
(\ts\mu_1+N-2\ts)+\ldots+(\ts\mu_k+N-k-1\ts)
\end{equation}
and
\begin{equation}
\label{secondsum}
(\ts\mu_1+N-2\ts)+\ldots+(\ts\mu_{k-1}+N-k\ts)+(\ts N-1+n\ts)\,;
\end{equation}
see the proof of the property [III.2.6] of 
Hall\ts-Littlewood polynomials
for a similar argument.
Hence if \eqref{firstsum} is the maximum of the two sums then 
due to \eqref{between} 
$$
\nu_1+\ldots+\nu_k
\,\le\,
\mu_1+\ldots+\mu_k-k
\,<\,
\la_1+\ldots+\la_k\,.
$$
If \eqref{secondsum} is the maximum 
then due to \eqref{between} and \eqref{lamun} 
\begin{gather*}
\nu_1+\ldots+\nu_k
\,\le\,
\mu_1+\ldots+\mu_{k-1}+n\,=
\\[4pt]
(\ts\la_1+\ldots+\la_k\ts)
-(\ts\mu_k-\la_{k+1}\ts)-\ldots-(\ts\mu_{N-1}-\la_N\ts)
\,\le\,
\la_1+\ldots+\la_k\,.
\end{gather*}
For any $k\le\ell(\la)$ the last inequality is strict
because $\ell(\mu)=\ell(\la)\,$. Thus $\nu<\la\,$.

By the definition of the Schur symmetric polynomial
corresponding to the partition $\nu=(\nu_1\lcd\nu_N,0,0,\ldots)$
the alternated sum of $N\ts!$ products 
obtained by permuting $x_1\lcd x_N$ in the monomial \eqref{numon}
is equal to the product
\begin{equation}
\label{desnu}
\De\ts(\ts x_1\lcd x_N)\,s_\nu(\ts x_1\lcd x_N)\,.
\end{equation}
Hence we have now proved that 
$F_{\mu,n}(x_1\lcd x_N)$ is a linear
combination of the products \eqref{desnu}
where $\nu<\la\,$. In the latter statement,
the products \eqref{desnu} can be replaced with the
respective products \eqref{depnu} by using the property [III.2.6].

Thus we have completed the proof of our theorem.
Further, by the definitions \eqref{vla} and \eqref{bla} the
factor $b_\mu(t)/v_\mu(t)$ appearing in the product \eqref{defmun} equals
$$
(\ts 1-t\ts)^{\ts N}\!
\prod_{j=1}^{N-\ell(\mu)\!}\ts\!(\ts1-t^{\,j}\ts)^{-1}\,.
$$
In particular, this factor is the same for all products
\eqref{defmun} such that $\ell(\mu)=\ell(\la)\,$.
Dividing the identity in our proposition 
by this factor and by $(-1)^{\ts N+1}$ we get
$$
\bigl(\ts1-t^{\,\ell(\la)}\ts\bigr)\,F_{\la,0}(x_1\lcd x_N)\ +
\!\!\!\sum_{\substack{\ell(\mu)=\ell(\la)\\\mu\neq\la}}\!\!\!
\phi_{\la\mu}(t)\,F_{\mu,n}(x_1\lcd x_N)=0
$$
for any fixed partition $\la$ such that $0<\ell(\la)<N\ts$.
Here the positive integer $n$ is determined by the partitions $\la$ and $\mu$
via the equality \eqref{lamun}.

It would be interesting to prove this identity
without using any properties of Macdonald operators, for instance by employing
the methods of \cite[Chapter 7]{L}.
It would be also interesting to find a link between this identity
and that from~\cite{DJ}.


\section*{\normalsize\bf Acknowledgements}

We are grateful to M.\,V.\,Feigin, G.\,I.\,Olshanski, J.\,Shiraishi 
and E.\,Vasserot for helpful comments on this work.
The first and the second named of us have been supported
by the EPSRC grants EP/I\,014071 and EP/H\ts000054 respectively.



\end{document}